\newcommand{\sgn}{\operatorname{sgn}}
\newcommand{\Span}{\operatorname{span}}
\newcommand{\rank}{\operatorname{rank}}
\newcommand{\cl}{\operatorname{cl}}
\newcommand{\W}{\mathcal{W}}
\newcommand{\V}{\mathcal{V}}
\newcommand{\T}{\mathfrak{t}^+}
\newcommand{\s}{\mathfrak{s}^+}
\newcommand{\C}{\mathfrak{c}^+}
\newcommand{\g}{\mathfrak{g}}
\newcommand{\e}{\epsilon_0}
\newcommand{\ip}[2]{\left( #1 , #2 \right)}
\newtheorem*{theorem}{Theorem}
\newtheorem*{lemma}{Lemma}
\newtheorem*{cor}{Corollary}
\newtheorem*{remark}{Remark}
\title{Regular Orbital Measures on Lie Algebras}
\author{Alex Wright}
\address{Department of Pure Mathematics \\ University  of Waterloo \\ Waterloo ON Canada, N2L 3G1}
\email{amwright@uwaterloo.ca}
\thanks{This research was supported in part by NSERC}
\thanks{The author would like to thank Kathryn Hare for her generous
help in preparing this note.}
\subjclass[2000]{Primary 58C35; Secondary 22E60, 43A70}
\keywords{Orbital measures, Lie algebras}
\begin{document}

  \baselineskip=17pt 

\begin{abstract}
Let $H_0$ be a regular element of an irreducible Lie Algebra $\g$,
and let $\mu_{H_0}$ be the orbital measure supported on $O_{H_0}$.
We show that $\hat{\mu}_{H_0}^k\in L^2(\g)$ if and only if
$k>\dim\g / (\dim\g-\rank\g)$.
\end{abstract}

\maketitle

\section{Introduction}

Let $G$ be a compact, connected, simple Lie group and $\g$ its Lie
algebra. It is well known that the non-trivial adjoint orbits in
$\g$ are compact submanifolds of proper dimension, but geometric
properties ensure that they generate $\g$. Consequently, if
$H_0\neq 0$ is in the torus $\mathfrak{t}$ of $\g$, and $\mu
_{H_0}$ is the orbital measure supported on the orbit $O_{H_0}$
containing $H_0$, ie, $\mu_{H_0}$ is the unique (up to
normalization) $G$-invariant measure on $O_{H_0}$, then some
convolution power
of $\mu _{H_0}$ is absolutely continuous to Lebesgue measure on $\g$ and even belongs to $L^{1+\varepsilon }$ for some $%
\varepsilon >0$ (see \cite{RS}). In \cite{Ra}, Ragozin showed that
dimension of $\g$ convolution powers sufficed, and this was
improved in a series of papers culminating in \cite{GHGAFA} with the minimal number of convolution powers being  $k_{G}=\rank G$ for the classical simple Lie algebras of type $%
B_{n}$, $C_{n}$ and $D_{n}$ and $k_{G}=\rank G+1$ for type
$A_{n}$. There it was also shown that if $\mu _{h}$ was the
orbital measure supported on the conjugacy class in $G$ containing
the non-central element $h$, then $\mu _{h}^{k_{G}}\in L^{2}(G)$.

In the simplest case $G=SU(2)$, $\g=\mathbb{R}^{3}$ and the
adjoint orbits are (the two dimensional) spheres centred at the
origin. The sum of two such spheres contains an open set and
consequently the convolution of any two orbital measures is
absolutely continuous \cite{Ra2}. In general, the generic orbits
(the so-called regular orbits defined below) have codimension
$\rank G$ and two convolution powers of such an orbital measure is
absolutely continuous (in either the group or algebra case).
Furthermore, for the generic orbital measure $\mu _{h}$ on the
group, one can use the Weyl character formula to see that $\mu
_{h}^{k_{2}}\in L^{2}(G)$ for $k_{2}=1+\rank G/(\dim G- \rank G)$
(see \cite{HStudia}) and this fact can be transferred to the Lie
algebra setting as well \cite{GHprivate}.

In this note we give a direct proof that if $\mu_{H_0}$ is any generic orbital measure on $\g$, then $\hat{\mu} _{H_0}^{k}\in L^{2}(\g)$ if and only if $%
k>1+\rank \g/(\dim \g-\rank \g)$. The novelty of our approach is
our geometric method, involving the root systems, of handling the
singularities which arise in the integral of the Fourier transform
of the measure.

Products of generic orbital measures are also studied in
\cite{DRW} and \cite {RT}; our approach recovers some of what was
proven in \cite{RT}.

\section{Definitions and Lemmas}

Let $T$ be a maximal torus of $G$ and $t$ be the corresponding
subalgebra of $\g$, also called the torus. Let $\Phi $ be the root
system of $\g$ with Weyl group $\W$ and positive roots $\Phi
^{+}$. Choose a base $\Delta =\{\beta _{1},\ldots  ,\beta _{n}\}$
for $\Phi$ and let $\T$ be the associated fundamental Weyl
chamber.
\begin{equation*}
\T=\{H\in t:(H,\beta _{j})>0\text{ for }j=1,\ldots  n\}
\end{equation*}
Given $H_0\in t$, the adjoint orbit of $H_0$ is given by
\begin{equation*}
O_{H_0}=\{Ad(g)H_0:g\in G\}\subseteq \g.
\end{equation*}
If $H_0\in \T$, then $H_0$ is called regular and $O_{H_0}$ is a
called a regular orbit.

The regular orbital measure, $\mu _{H_0}$, is the $G$-invariant
measure supported on the regular orbit $O_{H_0},$ normalized so
the Harish-Chandra formula gives
\begin{equation*}
\widehat{\mu }_{H_0}(H)=\frac{A_{H_0}(H)}{\prod_{\alpha \in \Phi ^{+}}(\alpha ,H)%
}\text{ for }H\in \T
\end{equation*}
where
\begin{equation*}
A_{H_0}(H)=\sum_{\sigma \in \W}\sgn(\sigma)
e^{i\ip{\sigma(H)}{H_0}}.
\end{equation*}

As $\mu _{H_0}$ is $G$-invariant, the Weyl integration formula implies that $%
\hat{\mu} _{H_0}^{k}\in L^{2}(\g)$ if and only if
\begin{equation*}
\int_{\T}\frac{\left| A_{H_0}(H)\right| ^{2k}}{\left|
\prod_{\alpha \in \Phi ^{+}}(\alpha ,H)\right| ^{2k-2}}dH<\infty
\text{.}
\end{equation*}
In this integral some of the inner products $(\alpha ,H)$
represent removable singularities on some walls of the Weyl
chamber. This is the primary obstacle in studying this integral
and we are able to deal with these singularities using geometry
and an induction argument.

Specifically, we will relate the integrand near a collection of
walls to the integrand for a subroot system. The power of our
induction is hidden in the fact that the the integrand is
continuous, and so is bounded on any neighborhood of the origin.
Several technical problems arise; in fact they are necessary
adaptations to the proof of a weaker result (Cor. 1), where the
technical results are not necessary. The case of a Lie algebra of
type $A_2$ is surprisingly representative, and the geometric
motivation for the results presented here come exclusively from
this case.

The notation will get slightly tedious, so we list it all here in
advance. Note that from now on we assume $\Phi$ is irreducible,
but we will consider reducible subroot systems of $\Phi$ that are
``simple"; these are simply those subroot systems for which a
subset of $\Delta$ can be chosen as a base.
\begin{center}
   \begin{tabular}{| l | l | }
     \hline
     $\g$ & An irreducible Lie algebra. \\ \hline
     $\Phi$ & The root system of $\g$. \\ \hline
     $\Delta=\{\beta_1,\ldots  ,\beta_n \}$ & The simple roots of
             $\Phi$. \\ \hline
     $\Phi^+$ & The positive roots of $\Phi$. \\ \hline
     $n$ & The rank of $\g$. \\ \hline
     $\W$ & The Weyl group of $\Phi$. \\ \hline
     $\T=\{H\in\g : \ip{H}{\beta_i}> 0 \text{ for all } i \}$ &
             The fund. Weyl chamber of $\g$. \\ \hline
     $\Psi$ & A simple subroot system of $\Phi$. \\ \hline
     $\V$ & The Weyl group of $\Psi$. \\ \hline
     $\{\gamma_1,\gamma_2,\ldots  ,\gamma_m \}\subset \Delta$ & A base
            for $\Psi$. \\ \hline
     $\Psi^+$ & The positive roots of $\Psi$. \\ \hline
     $m$ & Number of simple roots in $\Psi$. \\ \hline
     $ \s=\{ H\in \Span \Psi :\ip{H}{\gamma_i}> 0 \text{ for } i>1 \}$ &  The fund. Weyl
chamber of $\Psi$. \\ \hline
   \end{tabular}
\end{center}

Recall that every $H\in\s$ can be written as a non-negative linear
combination of the simple roots $\gamma_i$. This follows from the
fact that, in the irreducible case, all entries of the inverse of
the Cartan matrix are positive numbers. (See \cite{Hum}, section
13.4, exercise 8.)

We will need to break $\s$ up into the regions
$$R_i=\{H\in \s : \|H\|\geq 1, \ip{\gamma_i}{H}\geq
\ip{\gamma_j}{H} \text{ for all } j\}.$$
So $\s\setminus B_1=\cup_{i=1}^m R_i$. Now if
$$\Psi_1 = \Span_\mathbb{Z} \{\gamma_2,\ldots  ,\gamma_m\} \cap
\Psi$$
then the roots of $\Psi_1$ will correspond to removable
singularities on the walls of $\cl(R_1)$ when we calculate the
above integral with root system $\Psi$. Now let $\V_1$ be the Weyl
group of $\Psi_1$, and
$$\C=\{H\in \Span \Psi_1 : \ip{H}{\gamma_i} > 0 \text{ for }
i=2\ldots  m \}$$
be the fundamental Weyl chamber of $\Psi_1$. Finally, we define
$$P:\Span \Psi\to \Span \Psi : H \mapsto
\frac{1}{|\V_1|}\sum_{\sigma\in\V_1}\sigma(H).$$

\begin{lemma}[1]
Let $P$ be as above. Then
\begin{enumerate}
\item $\sigma(P(H))=P(H)$ for all $\sigma\in\V_1$.

\item $P$ is the projection from $\Span \Psi$ onto
$(\Span\Psi_1)^\perp$. So $I-P$ is the projection from $\Span
\Psi$ onto $\Span \Psi_1$.

\item $I-P$ in fact maps $\s$ to $\C$.

\item There are constants $a, b>0$ so that $\|P(H)\| \geq a
\|H\|$ and $\|(I-P)H\|\leq b \|P(H)\| $ if $H\in R_1$.
\end{enumerate}
\end{lemma}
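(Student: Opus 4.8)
The plan is to treat (i)--(iii) as essentially formal consequences of the fact that $P$ averages over the finite reflection group $\V_1$, and to locate all the real content in (iv), which is a convexity/compactness statement about the cone $R_1$.

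\emph{Parts (i) and (ii).} Since $P=\tfrac{1}{|\V_1|}\sum_{\sigma\in\V_1}\sigma$ is a finite average of linear isometries of $\Span\Psi$, it is linear and self-adjoint, and reindexing the sum by $\sigma\mapsto\tau\sigma$ gives $\tau\circ P=P$ for every $\tau\in\V_1$, which is (i). By (i) the map $P$ takes values in the $\V_1$-fixed subspace and fixes that subspace pointwise, so $P^2=P$; a self-adjoint idempotent is the orthogonal projection onto its range, which here is the $\V_1$-fixed subspace. As $\V_1$ is generated by the reflections in $\gamma_2,\ldots,\gamma_m$, an element is $\V_1$-fixed exactly when it is orthogonal to each $\gamma_i$ with $i\ge 2$, i.e. orthogonal to $\Span\Psi_1$ (note $\gamma_2,\ldots,\gamma_m$ lie in $\Psi_1$ and span $\Span\Psi_1$). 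Hence $P$ is the orthogonal projection of $\Span\Psi$ onto $(\Span\Psi_1)^\perp$ and $I-P$ is the orthogonal projection onto $\Span\Psi_1$; in particular $P\gamma_i=0$ for $i\ge 2$.

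\emph{Part (iii).} For $H\in\s$ we have $(I-P)H\in\Span\Psi_1$ by (ii), so only the positivity conditions defining $\C$ need to be checked; using self-adjointness of $P$ and $P\gamma_i=0$,
\[
\ip{(I-P)H}{\gamma_i}=\ip{H}{\gamma_i}-\ip{H}{P\gamma_i}=\ip{H}{\gamma_i}>0\qquad(i=2,\ldots,m),
\]
so $(I-P)H\in\C$.

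\emph{Part (iv), the main obstacle.} Both inequalities are invariant under $H\mapsto tH$ with $t>0$, and since $P$ is an orthogonal projection $\|H\|^2=\|P(H)\|^2+\|(I-P)H\|^2$; thus the second inequality follows from the first with $b=\sqrt{a^{-2}-1}$ once we have some $a>0$ with $\|P(H)\|\ge a\|H\|$ on $R_1$ (necessarily $a\le 1$ as $\|P(H)\|\le\|H\|$). Let $C_1=\{H\in\cl(\s):\ip{\gamma_1}{H}\ge\ip{\gamma_j}{H}\text{ for all }j\}$, a closed convex cone containing $R_1$. Because $\{H\in C_1:\|H\|=1\}$ is compact and $H\mapsto\|P(H)\|$ is continuous, it suffices to show this function has no zero on that set, i.e. $C_1\cap\Span\Psi_1=\{0\}$ (recall $\Span\Psi_1=\ker P$). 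Suppose $0\ne H\in C_1\cap\Span\Psi_1$. Then $H$ lies in the closed fundamental chamber of $\Psi_1$ (it is in $\Span\Psi_1$ and $\ip{H}{\gamma_i}\ge0$ for $i\ge 2$), so by the positivity of the inverse Cartan matrix --- applied on each irreducible factor of $\Psi_1$ and extended to the closure --- we may write $H=\sum_{i=2}^m c_i\gamma_i$ with all $c_i\ge 0$. Since $\gamma_1,\ldots,\gamma_m$ is a base, $\ip{\gamma_1}{\gamma_i}\le 0$ for $i\ge 2$, whence $\ip{\gamma_1}{H}\le 0$; combining this with the defining inequalities $\ip{\gamma_i}{H}\le\ip{\gamma_1}{H}$ of $C_1$ gives
\[
\|H\|^2=\sum_{i=2}^m c_i\ip{\gamma_i}{H}\le\Big(\sum_{i=2}^m c_i\Big)\ip{\gamma_1}{H}\le 0,
\]
forcing $H=0$, a contradiction. (When $m=1$, $\Psi_1=\emptyset$, $P=I$, and (iii)--(iv) are trivial; if $R_1=\emptyset$ there is nothing to prove.) The essential point --- and the geometric insight behind the construction --- is that the inequality $\ip{\gamma_1}{H}\ge\ip{\gamma_j}{H}$ cutting out $R_1$ is exactly what keeps $R_1$ away from the singular subspace $\Span\Psi_1$ except at the origin, so that on $R_1$ the $(m-1)$-dimensional component $(I-P)H$ is uniformly dominated by the one-dimensional transverse component $P(H)$.
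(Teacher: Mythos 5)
Your proof is correct and takes essentially the same route as the paper: the average over $\V_1$ is the orthogonal projection onto the $\V_1$-fixed subspace $(\Span\Psi_1)^\perp$, and part (iv) is the same compactness/homogeneity argument, with the non-vanishing of $P$ on the normalized region deduced from writing $H$ as a non-negative combination of $\gamma_2,\ldots,\gamma_m$, using $\ip{\gamma_1}{\gamma_i}\le 0$ and the defining inequality $\ip{\gamma_1}{H}\ge\ip{\gamma_j}{H}$ of $R_1$. One small wording caveat: $P$ is self-adjoint not merely because it is an average of isometries (a single rotation shows that claim is false), but because $\V_1$ is a group of orthogonal maps, so the sum defining $P$ is invariant under $\sigma\mapsto\sigma^{-1}$.
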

Before reading the proof of this result, the reader is encouraged
to graphically verify part (ii) for the case $\Phi=A_2$.
\begin{proof}
(i) If $\sigma_1\in \V_1$ then
$$\sigma_1(P(H))=\frac{1}{|\V_1|}\sum_{\sigma\in\V_1}\sigma_1(\sigma(H))=\frac{1}{|\V_1|}\sum_{\sigma\in
\sigma_1\V_1}\sigma(H)=P(H).$$

(ii) Write $H=s+r$, where $s\in\Span\Psi_1$ and
$r\in(\Span\Psi_1)^\perp$. If $\alpha \in \Psi_1$ then
$$\sigma_\alpha(r)=r-\frac{2\ip{r}{\alpha}}{\ip{\alpha}{\alpha}}\alpha=r.$$
Since $\V_1$ is generated by reflections of the form
$\sigma_\alpha$, $\alpha\in\Psi_1$, it follows that $\sigma(r)=r$
for all $\sigma\in\V_1$. Hence $$P(H)=P(r)+P(s)=r+P(s).$$ If
$\alpha\in \Psi_1$ then by (i) $\sigma_\alpha(P(s))=P(s)$. Since
we also have
$$\sigma_\alpha(P(s))=P(s)-\frac{2\ip{P(s)}{\alpha}}{\ip{\alpha}{\alpha}}\alpha
$$ we get that $P(s)\in(\Span\Psi_1)^\perp$. But
$P(s)\in\Span\Psi_1$ so $P(s)=0$. Putting all of this together, we
get that $P(H)=r$ is the projection of $H$ onto $(\Span
\Psi_1)^\perp$.

Of course it follows that $H-P(H)$ is the projection of $H$ onto
$\Span\Psi_1$.

(iii) If $k>1$ and $H\in\s$ then $$\ip{\gamma_k
}{H-P(H)}=\ip{\gamma_k }{H}> 0$$ since
$P(H)\in\Span\{\gamma_2,\ldots  ,\gamma_m\}^\perp$.

(iv) Suppose, in order to obtain a contradiction, that $H\in
\cl(R_1)$ and $P(H)=0$. Then $H\in \Span\Psi_1$ and $H \in\cl(\s)$
so we can write $$H=c_2\gamma_2+\ldots  +c_m\gamma_m$$ with all
$c_i\geq 0$. Thus
$$\ip{H}{\gamma_1}=c_2\ip{\gamma_2}{\gamma_1}+\ldots
+c_m\ip{\gamma_m}{\gamma_1}.$$
We also have $\ip{\gamma_i}{\gamma_j}\leq 0$ if $i\neq j$, so in
fact $\ip{H}{\gamma_1}\leq 0$. Since $H\in \s$
$\ip{H}{\gamma_1}\geq 0$. Combining these we get
$\ip{H}{\gamma_1}=0$. From the definition of $R_1$ we get, for
each $i=1,\ldots, m$, that
$$0\leq \ip{H}{\gamma_i}\leq \ip{H}{\gamma_1} = 0$$
which contradicts the fact that $\|H\|\geq 1$. Thus $P(H)\neq 0$
on $\cl(R_1)$. In particular, $P(H)$ is nonzero on the compact set
$\cl(R_1) \cap \{H: \|H\|=1 \}$, so there is an $a>0$ such that
$a\leq \|P(H)\|$ if $\|H\|=1$, $H\in R_1$. Thus we see that
$a\|H\|\leq \|P(H)\|$ on $R_1$. Finally, we can take
$b=\frac1{a}+1$.
\end{proof}

We commented earlier that the roots of $\Psi_1$ will cause
problems in $R_1$ when integrating. As it turns out, all the other
roots of $\Psi$ are very well behaved on $R_1$. (It is quite
helpful to think of the roots of $\Psi_1$ as the ``good" roots on
$R_1$, and the roots of $\Psi\setminus \Psi_1$ as the ``bad"
roots.)

\begin{lemma}[2]
There exists $C>0$ such that for all
$\alpha\in\Psi^+\setminus\Psi_1^+$ and for all $H\in R_1$
$$\ip{H}{\alpha}\geq C\|H\|.$$
\end{lemma}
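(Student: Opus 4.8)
The plan is to show that the "bad" roots $\alpha\in\Psi^+\setminus\Psi_1^+$ are bounded below on $R_1$ by $C\|H\|$ for some uniform $C>0$, using a compactness argument of exactly the same flavour as in the proof of Lemma 1(iv). Since both sides of the desired inequality are homogeneous of degree one in $H$, it suffices to prove that $\ip{H}{\alpha}>0$ for every $H\in\cl(R_1)$ with $\|H\|=1$ and every $\alpha\in\Psi^+\setminus\Psi_1^+$; then $\ip{H}{\alpha}$ is a continuous, strictly positive function on the compact set $\cl(R_1)\cap\{\|H\|=1\}$ for each of the finitely many such $\alpha$, so it attains a positive minimum $C_\alpha$, and we take $C=\min_\alpha C_\alpha$, then rescale.

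So the heart of the matter is the claim that $\ip{H}{\alpha}>0$ whenever $H\in\cl(R_1)$, $\|H\|=1$, and $\alpha\in\Psi^+\setminus\Psi_1^+$. First I would write $\alpha=\sum_{i=1}^m d_i\gamma_i$ with all $d_i\geq 0$ (as $\alpha$ is a positive root of $\Psi$ in the base $\{\gamma_1,\dots,\gamma_m\}$), and observe that because $\alpha\notin\Psi_1^+=\Psi\cap\Span_{\mathbb{Z}}\{\gamma_2,\dots,\gamma_m\}$, the coefficient $d_1$ is strictly positive. Next, since $H\in\cl(R_1)\subseteq\cl(\s)$, write $H=\sum_{j=1}^m c_j\gamma_j$ with all $c_j\geq 0$ (using the cited fact that in the irreducible case the inverse Cartan matrix has positive entries, so $\cl(\s)$ is the non-negative span of the $\gamma_j$). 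Then
$$\ip{H}{\alpha}=\sum_{j=1}^m c_j\ip{\gamma_j}{\alpha}=d_1\ip{\gamma_1}{H}-d_1\sum_{k\ge 2}c_k\ip{\gamma_1}{\gamma_k}\cdot(\text{correction})$$
— rather than push symbolic identities around, the cleaner route is: $\ip{H}{\alpha}=\sum_j d_j\ip{H}{\gamma_j}$, and each term $d_j\ip{H}{\gamma_j}\geq 0$ since $d_j\geq 0$ and $\ip{H}{\gamma_j}\geq 0$ on $\cl(\s)$. Hence $\ip{H}{\alpha}\geq d_1\ip{H}{\gamma_1}\geq 0$, with equality forcing $\ip{H}{\gamma_1}=0$ (since $d_1>0$). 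But the definition of $R_1$ gives $\ip{H}{\gamma_1}\geq\ip{H}{\gamma_j}$ for all $j$, so $\ip{H}{\gamma_1}=0$ would force $0\leq\ip{H}{\gamma_j}\leq 0$ for every $j$, i.e. $\ip{H}{\gamma_j}=0$ for all $j$, whence $\ip{H}{H}=\sum_j c_j\ip{H}{\gamma_j}=0$, contradicting $\|H\|=1$. Therefore $\ip{H}{\alpha}>0$, as claimed.

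The main obstacle — though it is more a point requiring care than a genuine difficulty — is making sure the coefficient $d_1$ of $\gamma_1$ in the expansion of $\alpha$ is genuinely nonzero, which is where the precise definition $\Psi_1=\Span_{\mathbb{Z}}\{\gamma_2,\dots,\gamma_m\}\cap\Psi$ is used: a positive root of $\Psi$ either lies in this $\mathbb{Z}$-span (hence in $\Psi_1^+$) or has $d_1\geq 1>0$. Everything else is the standard "positive function on a compact set has positive minimum, then homogenize" device already deployed in Lemma 1(iv), and I would phrase the write-up to emphasize that parallel. One should also note that $\cl(R_1)\cap\{\|H\|=1\}$ is nonempty and compact (it is closed and bounded, and nonempty because $R_1$ contains, e.g., suitable large multiples of $\gamma_1$), so the minimum is actually attained.
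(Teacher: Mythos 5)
Your proposal is correct and follows essentially the same route as the paper: expand $\alpha=\sum d_i\gamma_i$ with $d_1>0$ since $\alpha\notin\Psi_1^+$, bound $\ip{H}{\alpha}\geq d_1\ip{H}{\gamma_1}>0$ on the unit-sphere slice of $\cl(R_1)$, and then take the positive minimum over this compact set and over the finitely many bad roots, homogenizing to get the factor $\|H\|$. Your write-up in fact spells out two points the paper leaves implicit (why $\ip{H}{\gamma_1}$ cannot vanish on $\cl(R_1)$, and the homogeneity step); just delete the abandoned half-finished display identity, which plays no role in the argument.
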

\begin{proof}
Take $\alpha\in\Psi^+\setminus\Psi_1^+$. Write $\alpha=\sum a_i
\gamma_i$ with all $a_i\geq 0$. Since $\alpha\notin \Psi_1$,
$a_1>0$. Now if $H\in \cl(R_1)$
$$ \ip{H}{\alpha}= \sum a_i\ip{H}{\gamma_i} \geq  a_1
\ip{H}{\gamma_1}>0.$$
Thus the function $$f(H)=\ip{H}{\alpha}$$ is non zero on the
compact set $\cl(R_1)\cap\{H:\|H\|=1\}$. Hence it attains a
positive minimum $M_\alpha$. We can take
$C=\min_{\alpha\in\Psi^+\setminus\Psi_1^+}M_\alpha$.
\end{proof}

We will be interested in subroot systems of $\Phi$ of the form
$$\{a_1\alpha_1+a_2\alpha_2+\ldots  +a_m\alpha_{m}:
a_i\in\mathbb{Z} \text{ for all } i\}\cap \Phi$$ where
$\{\alpha_1, \alpha_2,\ldots \alpha_{m}\}\subset \Delta$. We will
call these \textbf{simple} subroot systems. Note that $\Psi_1$ is
a simple subroot system of $\Phi$. Simple subroot systems are the
only type of subroot systems that will come up in our induction.
Restricting our attention to simple subroot systems makes the
verification of the following technical lemma easier.
\begin{lemma}[3]
Suppose $\Phi$ is an irreducible root system, with simple subroot
system $\Psi$ with $m$ simple roots, where $m<n$. Then
$$\frac{n}{|\Phi|}<\frac{m}{|\Psi|}$$
\end{lemma}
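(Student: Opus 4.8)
The plan is to reduce the inequality $n/|\Phi| < m/|\Psi|$ to a statement about how many roots a simple subroot system "misses," and to handle this by induction on $n-m$ together with a case analysis based on the classification of irreducible root systems. The key quantities are the root counts $|\Phi| = n(n+1), 2n^2, 2n^2-2n, 72, 126, 240, 48, 12, 14$ for types $A_n$ through $G_2$, and the observation that a simple subroot system $\Psi$ corresponds to deleting one or more nodes from the Dynkin diagram of $\Phi$, so $\Psi$ decomposes as a product of irreducible root systems of smaller rank whose diagrams are the connected components of the remaining graph.

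First I would reduce to the case where exactly one simple root is removed, i.e. $m = n-1$. Indeed, if the inequality holds whenever one node is deleted, then deleting nodes one at a time and composing gives a chain $n/|\Phi| < (n-1)/|\Phi'| < \cdots < m/|\Psi|$, provided each intermediate root system is still irreducible — which is not guaranteed, so I would actually need the slightly more general inductive claim covering the case where $\Phi$ itself is only required to be a product of irreducibles, with $n/|\Phi|$ interpreted via $|\Phi| = \sum |\Phi^{(j)}|$ over components. Thus the real work is: for $\Phi$ irreducible of rank $n$, and $\Psi$ obtained by deleting one node, show $n/|\Phi| < (n-1)/|\Psi|$, i.e. $n |\Psi| < (n-1)|\Phi|$, i.e.
$$
|\Phi| - |\Psi| > \frac{|\Phi|}{n}.
$$
So the heart of the matter is the estimate: deleting one node from the Dynkin diagram removes strictly more than $|\Phi|/n$ roots — equivalently, the "average" node, when deleted, takes more than its share. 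Since $|\Phi| = \sum_{i=1}^n (|\Phi| - |\Phi \setminus \beta_i|)/1$ does not quite telescope, I would instead verify $|\Phi| - |\Psi| > |\Phi|/n$ directly: $|\Phi|/n$ equals $n+1$ for $A_n$, $2n$ for $B_n/C_n$, $2n-2$ for $D_n$, and bounded constants in the exceptional cases, while deleting an end node from $A_n$ leaves $A_{n-1}$ with $|\Phi|-|\Psi| = n(n+1)-(n-1)n = 2n \geq n+1$ for $n\geq 1$, with similar elementary inequalities in the other classical families; the finitely many exceptional cases $E_6, E_7, E_8, F_4, G_2$ and their one-node deletions are checked by hand from tables.

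The main obstacle is organizing the induction so that reducibility of intermediate subsystems is handled cleanly, and making sure the single-node-deletion bound $|\Phi|-|\Psi| > |\Phi|/n$ is uniform. The delicate cases are the ones where deleting a node splits the diagram into two components, since then $|\Psi|$ can be fairly large (e.g. deleting a middle node of $D_n$ or $A_n$); I would bound $|\Psi|$ from above in these cases using convexity of $k \mapsto |A_k|$ — splitting $n-1$ nodes into pieces of sizes $p$ and $q$ with $p+q=n-1$ makes $|A_p|+|A_q| = p(p+1)+q(q+1)$ smallest when balanced and largest at the extreme $p=n-1,q=0$, which is exactly the end-node case already handled, so the end-node deletion is the worst case and it suffices to check that one. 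For the classical families this pins everything down with a one-line inequality each; for the five exceptional types it is a finite check. Assembling these pieces gives the strict inequality $n/|\Phi| < m/|\Psi|$ in general.
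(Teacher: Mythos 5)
There is a genuine gap in your reduction. Your induction on $n-m$ deletes one node at a time, and as you yourself note, the intermediate systems are generally reducible, so you are forced to invoke the one-step inequality for an ambient system that is only a product of irreducibles. That more general claim is false: the paper points out the counterexample $\Psi=B_3$ inside $\Phi=B_3\times A_1\times A_1\times A_1$, where $m/|\Psi|=3/18=1/6$ while $n/|\Phi|=6/24=1/4$. The failure already occurs for a single deletion from a reducible system, and even along chains that start from an irreducible $\Phi$: inside $A_5$, the chain $A_2\times A_2\supset A_2\times A_1\supset A_2$ has ratios $4/12=1/3$, then $3/8$, then $2/6=1/3$, so the last deletion strictly \emph{decreases} the ratio. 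Hence the proposed chain $n/|\Phi|<(n-1)/|\Phi'|<\cdots<m/|\Psi|$ cannot be built step by step in the generality you need; your one-node estimate $|\Phi|-|\Psi|>|\Phi|/n$ is fine for irreducible $\Phi$, but it does not compose, and the ``slightly more general inductive claim'' you propose to patch this is exactly the statement the lemma's irreducibility hypothesis is there to exclude.

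The paper sidesteps all of this: for fixed $m$, minimizing $m/|\Psi|$ means maximizing $|\Psi|$, so it suffices to check the largest rank-$m$ simple subsystem of each irreducible $\Phi$, and these finitely many maximal cases are read off from the classification and tabulated (Appendix A). If you want to salvage your inductive approach, you could first reduce to irreducible $\Psi$ via the mediant inequality $\frac{\sum_i m_i}{\sum_i|\Psi_i|}\geq\min_i\frac{m_i}{|\Psi_i|}$ applied to the components of $\Psi$, and then join a connected subdiagram $\Psi$ to the connected diagram of $\Phi$ by adding one node at a time so that every intermediate subsystem is irreducible; your one-step bound, applied only to irreducible ambient systems, would then chain correctly. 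As written, though, the induction is broken at precisely the reducible intermediate stages.
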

\begin{proof}
When we look at this result for a particular $m$, it is clearly
sufficient to prove it for the largest $\Psi$ with $m$ simple
roots. We list these subroot systems in Appendix A, along with the
ratios in question. See \cite{Hum} for basic facts needed about
subroot systems.
\end{proof}

It is worth noting that this lemma is not true if we allow $\Phi$
to be reducible. For example, consider a subroot system of Lie
type $B_3$ ($\frac{m}{|\Psi|}=\frac16$) in a root system of Lie
type $B_3\times A_1\times A_1\times A_1$
($\frac{n}{|\Phi|}=\frac14$).

We now set $\e>0$ to be any number with
$\e<\frac{m}{|\Psi|}-\frac{n}{|\Phi|}$ for all proper simple
subroot systems $\Psi$ of $\Phi$. We will need this $\e$ later for
technical reasons.

\section{The main result}

\begin{theorem}
Let $\Phi$ be an irreducible root system. Then
$\hat{\mu}_{H_0}^k\in L^2(\g)$ if and only if
$k>1+\frac{n}{|\Phi|}=\dim \g / (\dim \g - \rank\g)$.
\end{theorem}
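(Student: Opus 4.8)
The integral to control is
$$
I_k \;=\; \int_{\T}\frac{\left|A_{H_0}(H)\right|^{2k}}{\left|\prod_{\alpha\in\Phi^+}(\alpha,H)\right|^{2k-2}}\,dH,
$$
and I want to show it is finite precisely when $k>1+n/|\Phi|$. The only obstruction to convergence is the behaviour of the integrand near the walls of $\T$, where factors $(\alpha,H)$ vanish; far from the walls the numerator is bounded (it is a trigonometric polynomial) and there is nothing to prove. The key structural fact driving the whole argument is that $A_{H_0}$ is a \emph{Weyl-alternating} function: when $H$ approaches a wall $(\gamma,H)=0$ for a simple root $\gamma$, the pairing of terms $\sigma$ and $\sigma_\gamma\sigma$ in the sum defining $A_{H_0}$ forces $A_{H_0}(H)$ to vanish to first order, exactly cancelling the factor $(\gamma,H)$ in the denominator. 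Thus each simple-root wall contributes a \emph{removable} singularity, and more generally, near the stratum where a simple subroot system $\Psi_1$ (the roots orthogonal to some face) degenerates, $A_{H_0}$ vanishes to the order $|\Psi_1^+|$, matching the corresponding product of ``good'' roots. This is precisely the geometry set up in the Definitions section: on the region $R_1$, the roots of $\Psi_1$ are the removable ones and, by Lemma 2, the roots of $\Psi\setminus\Psi_1$ are bounded below by $C\|H\|$.

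**The induction.** I would prove the ``if'' direction by induction on the rank $n$, using the decomposition $\T\setminus B_1=\bigcup_i R_i$ and analysing each $R_i$ separately (by the Weyl-group symmetry it suffices to treat $R_1$). On $R_1$, use Lemma 1: write $H=P(H)+(I-P)H$ with $P(H)\in(\Span\Psi_1)^\perp$ and $(I-P)H\in\C\subseteq\Span\Psi_1$, and by Lemma 1(iv) we have $\|P(H)\|\geq a\|H\|$ and $\|(I-P)H\|\leq b\|P(H)\|$. Split the product over $\Phi^+$ into (a) roots of $\Psi_1^+$, which depend only on $(I-P)H$ and produce the lower-dimensional integral, (b) roots of $\Psi^+\setminus\Psi_1^+$, bounded below by $C\|H\|\geq C\|P(H)\|$ by Lemma 2, and (c) all remaining roots of $\Phi^+\setminus\Psi^+$, which are bounded below by a constant times $\|H\|$ on $R_1$ since they are not orthogonal to $P(H)$ (away from the origin, using $\|H\|\geq1$). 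The removable cancellation lets me bound $|A_{H_0}(H)|\leq (\text{const})\cdot\prod_{\alpha\in\Psi_1^+}|(\alpha,H)|\cdot(\text{bounded factor depending on }P(H))$ — more precisely, the continuity of the full integrand of the \emph{subroot-system} integral means that $|A_{H_0}(H)|^2/\prod_{\Psi_1^+}|(\alpha,H)|^2$ is controlled by the inductive hypothesis applied to $\Psi_1$. Carrying out this substitution, integrating over the fibre $(I-P)H$ by the inductive hypothesis for $\Psi_1$ (which has $m-1<n$ simple roots), and integrating the leftover radial factor in $P(H)$ over a cone, I get convergence provided $2k-2$ does not exceed a bound governed by $|\Phi^+|$, $|\Psi_1^+|$, and the dimension count — and Lemma 3 is exactly what guarantees the arithmetic works out: the loss from passing to the subsystem is strictly controlled by $\e$, so the inductive estimate survives with room to spare. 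The base case is a rank-one (or direct-product of rank-one) computation done by hand.

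**The ``only if'' direction.** Here I would produce a lower bound on $I_k$ showing divergence when $k\leq 1+n/|\Phi|$. The natural place to look is a neighbourhood of a generic point of the deepest face of $\T$ — a full-rank wall — or, more simply, a suitable cone approaching the origin, where $|A_{H_0}(H)|$ vanishes to exactly order $n$ (no more), so that $|A_{H_0}(H)|^{2k}\sim\|H\|^{2nk-2n+\dots}$ after extracting the removable part, while the denominator $|\prod_{\alpha\in\Phi^+}(\alpha,H)|^{2k-2}\sim\|H\|^{(2k-2)|\Phi^+|}$ on a cone of directions bounded away from all walls except the controlled ones. Comparing the exponent against the dimension $n$ of the integration region near that stratum gives divergence exactly at the threshold. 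Concretely: on a cone near the origin, $|\widehat{\mu}_{H_0}(H)|$ behaves like $\|H\|^{-|\Phi^+|+n}$ times a bounded function bounded away from zero in suitable directions (the numerator is $\sim\|H\|^n$ from the lowest-order term of the alternating sum, after dividing out nothing), so $\int\|H\|^{(-|\Phi^+|+n)\cdot 2k}\cdot\|H\|^{2|\Phi^+|}\,dH$ over this cone, which by passing to polar coordinates converges iff $2k(|\Phi^+|-n)-2|\Phi^+|<n$, i.e. $k<(2|\Phi^+|+n)/(2|\Phi^+|-2n)=1+n/(|\Phi|-n)$... — I will need to be careful that the exponent bookkeeping gives the claimed threshold $1+n/|\Phi|$ and not something else, which tells me the right region to integrate over is not simply a cone at the origin but rather a tube around a codimension-$n$ face, where the integration region has the correct dimension.

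**Main obstacle.** The hard part is unquestionably the removable-singularity estimate inside the induction: proving that on $R_1$ one has $|A_{H_0}(H)|\le C\,\bigl|\prod_{\alpha\in\Psi_1^+}(\alpha,H)\bigr|\cdot g(P(H))$ for a function $g$ that is itself controlled by the full alternating sum of the \emph{subsystem} $\Psi$ (so that the inductive hypothesis can be fed in), and simultaneously tracking that the ``bad'' roots of $\Phi^+\setminus\Psi_1^+$ contribute only positive powers of $\|P(H)\|$ without hurting the exponent budget. This is where the paper says ``several technical problems arise''; the continuity of the subsystem integrand on a neighbourhood of the origin (hence its boundedness) is the lever that makes it go through, and choosing the subsystem $\Psi$ at each stage so that Lemma 3's strict inequality leaves the required slack $\e$ is the delicate part of the bookkeeping. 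The ``only if'' direction, by contrast, is a direct estimate on a single well-chosen piece of $\T$ and should be comparatively routine once the right region is identified.
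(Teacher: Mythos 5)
Your sufficiency sketch follows the paper's architecture (the regions $R_i$, Lemmas 1--3, induction on simple subroot systems), but the step you yourself flag as the main obstacle --- a pointwise bound $|A_{H_0}(H)|\le C\,\bigl|\prod_{\alpha\in\Psi_1^+}\ip{\alpha}{H}\bigr|\,g(P(H))$ feeding the induction --- is neither proved nor in fact what is needed. The paper never establishes any pointwise removable-singularity estimate of this kind. Instead it splits the alternating sum over $\V$ into left cosets of $\V_1$, bounds the $2k$-th power by the sum over cosets, factors out the unimodular phase $e^{i\ip{\sigma_j(P(H))}{H_0}}$, and observes that each coset term is \emph{exactly} the subsystem numerator $\sum_{\sigma\in\V_1}\sgn(\sigma)e^{i\ip{\sigma(H-P(H))}{H_0'}}$ with the (still regular) parameter $H_0'=(I-P)\sigma_j(H_0)$; after Lemma 2 converts the roots of $\Psi^+\setminus\Psi_1^+$ into a factor $\|H\|^{-(k-1)(|\Psi|-|\Psi_1|)}$, the orthogonal change of variables $H\mapsto(P(H),(I-P)H)$ and Fubini make the inner integral literally the inductive quantity for $\Psi_1$ over $\C\cap B_{bs}$ (continuity of that integrand near the origin is what lets the ball-version of the hypothesis apply), and the $\e$-slack from Lemma 3 closes the induction. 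Note also that within the induction only roots of the current subsystem $\Psi$ ever appear, so your class (c) of roots in $\Phi^+\setminus\Psi^+$ plays no role. So the ``if'' half is a plausible but incomplete sketch: the missing idea is the coset decomposition of $A_{H_0}$, which is what allows the induction hypothesis to be invoked without any pointwise cancellation lemma.

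The ``only if'' half rests on a wrong mechanism and would fail. No bounded region of $\T$ can produce divergence: the integrand equals $|\hat{\mu}_{H_0}(H)|^{2k}\prod_{\alpha\in\Phi^+}|\ip{\alpha}{H}|^{2}$, and $\hat{\mu}_{H_0}$ is the Fourier transform of a finite measure, hence continuous and bounded, so the integrand is continuous (in particular bounded near the origin and near every wall); moreover $A_{H_0}$ vanishes at the origin to order $|\Phi^+|$, not $n$, since $\hat{\mu}_{H_0}(0)=1$. The divergence for $k\le 1+n/|\Phi|$ comes from insufficient decay at infinity: along interior directions the integrand behaves like $r^{-(k-1)|\Phi|}$ times an oscillating numerator, and the real issue is to bound that numerator below on a set of radii of positive density. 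The paper does this by fixing the angular variables, noting $f_\Phi(r)$ is almost periodic in $r$, hence $\ge\epsilon$ on a translate of a fixed interval inside every window of length $N$, so the radial integral dominates $\sum_n (nN)^{\,n-1-(k-1)|\Phi|}$, which diverges precisely when $k\le 1+n/|\Phi|$. Your proposed local analysis near a ``codimension-$n$ face'' (i.e.\ the origin) cannot give divergence, and your own exponent count, yielding $1+n/(|\Phi|-n)$, is a symptom of looking in the wrong regime; the fix is not a better stratum but the regime $\|H\|\to\infty$ together with a quantitative recurrence of the numerator such as almost periodicity.
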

\begin{cor}[1] If $\mu$ is a regular orbital measure, then
$\hat{\mu}^{\frac32}\in L^2(\g)$.
\end{cor}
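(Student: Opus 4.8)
Here is how I would approach Corollary 1.

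First I would note that it is an immediate consequence of the Theorem: one has $\tfrac32 > \dim\g/(\dim\g-\rank\g) = 1+n/|\Phi|$ exactly when $|\Phi|>2n$, i.e. $|\Phi^+|>n$ (equivalently $\dim\g>3\rank\g$), and for an irreducible root system $|\Phi^+|=n$ only in type $A_1$; so the hypothesis of the Theorem with $k=\tfrac32$ holds whenever $\rank\g\ge2$. (For $\g=\mathfrak{su}(2)$ one has equality, the orbital measures are surface measures on spheres in $\mathbb R^3$, and $\hat\mu^{3/2}\notin L^2$; the corollary is thus to be understood for $\rank\g\ge2$.)

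Since the remark preceding the corollary advertises a proof not needing the technical apparatus, I would also argue directly. By the Weyl-integration reduction established above it suffices to show $\int_{\T}|A_{H_0}(H)|^{3}|\pi(H)|^{-1}\,dH<\infty$, writing $\pi(H)=\prod_{\alpha\in\Phi^+}(\alpha,H)$. On a fixed ball about the origin the integrand equals $|\pi(H)|^{2}|A_{H_0}(H)/\pi(H)|^{3}$, and since $A_{H_0}/\pi=\hat\mu_{H_0}$ extends continuously across every wall it is bounded there and contributes a finite amount; so only $\|H\|\ge1$ matters. There I would use just two pointwise bounds on $|A_{H_0}|$: the trivial one $|A_{H_0}(H)|\le|\W|$, and $|A_{H_0}(H)|=|\hat\mu_{H_0}(H)|\,|\pi(H)|\le\hat\mu_{H_0}(0)\,|\pi(H)|=:M_{H_0}\,|\pi(H)|$, where $|\hat\mu_{H_0}|\le\hat\mu_{H_0}(0)$ is simply that the Fourier transform of the positive measure $\mu_{H_0}$ is largest at $0$.

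Interpolating these, $|A_{H_0}(H)|\le|\W|^{1-\lambda}\bigl(M_{H_0}|\pi(H)|\bigr)^{\lambda}$ for every $\lambda\in[0,1]$, so for $0<\lambda<\tfrac13$ the integrand is bounded by a constant times $|\pi(H)|^{3\lambda-1}=|\pi(H)|^{-\nu}$ with $\nu:=1-3\lambda\in(0,1)$. Using the homogeneity of $\pi$ (degree $|\Phi^+|$) and polar coordinates,
\[
\int_{\T,\;\|H\|\ge1}|\pi(H)|^{-\nu}\,dH=\Bigl(\int_1^{\infty}r^{\,n-1-\nu|\Phi^+|}\,dr\Bigr)\Bigl(\int_{\Omega}\pi(\omega)^{-\nu}\,d\sigma(\omega)\Bigr),
\]
with $\Omega=\{\|\omega\|=1\}\cap\cl(\T)$. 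The radial factor is finite iff $\nu>n/|\Phi^+|$. For the angular factor, near the face $\{(\beta_i,\cdot)=0:i\in S\}$ of $\Omega$ (for $\emptyset\ne S\subsetneq\Delta$) the vanishing factors of $\pi$ are precisely the positive roots of the simple subsystem $\Psi_S$ generated by $\{\beta_i:i\in S\}$, and resolving this boundary stratum one finds $\int_\Omega\pi^{-\nu}d\sigma<\infty$ iff $\nu<|S|/|\Psi_S^+|$ for every such $S$. Thus an admissible $\nu$, and hence $\lambda=(1-\nu)/3$, exists iff $n/|\Phi^+|<\min_{\emptyset\ne S\subsetneq\Delta}|S|/|\Psi_S^+|$, which is exactly Lemma 3 applied to each proper $\Psi_S$.

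So in this route all the real analysis is compressed into the single bound $|A_{H_0}|\le M_{H_0}|\pi|$ (positivity of the orbital measure) together with the elementary interpolation; the one genuine thing to verify is the nonemptiness of the exponent interval for $\nu$, which is Lemma 3, and that is also exactly where the argument dies for $A_1$, since there $S$ ranges over the empty collection and the radial constraint $\nu>n/|\Phi^+|=1$ is incompatible with $\nu<1$. Notably Lemmas 1 and 2, the regions $R_i$, and the constant $\e$ play no role, which is what the remark before the corollary is flagging; the main obstacle is simply to have the two-sided control of $A_{H_0}$ in hand, after which the exponent bookkeeping is routine.
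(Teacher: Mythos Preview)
Your first paragraph is exactly what the paper does: Corollary 1 is stated without proof as an immediate consequence of the Theorem, and your observation that $A_1$ must be excluded (since there $1+n/|\Phi|=3/2$, so the theorem gives $\hat\mu^{3/2}\notin L^2$) is a correct caveat that the paper leaves implicit.

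Your second, direct argument is a genuinely different route. Replacing the coset decomposition of $A_{H_0}$ by the single pointwise bound $|A_{H_0}|\le M_{H_0}\,|\pi|$ (positivity of $\mu_{H_0}$), interpolated against the trivial bound, is a nice idea and correctly reduces matters to the integrability of $|\pi|^{-\nu}$; the final appeal to Lemma~3 to locate an admissible $\nu\in(n/|\Phi^+|,\min_S |S|/|\Psi_S^+|)$ is exactly right. But the step ``resolving this boundary stratum one finds $\int_\Omega\pi^{-\nu}\,d\sigma<\infty$ iff $\nu<|S|/|\Psi_S^+|$ for every such $S$'' is doing real work you have not carried out. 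Near a corner where several walls meet, the vanishing factors of $\pi$ are dependent linear forms in the transverse variables (already for an $A_2$ corner one has $x,\,y,\,x+y$), and establishing that integrability criterion rigorously requires an inductive passage from $\Psi$ to the subsystem $\Psi_1$ obtained by deleting a simple root---precisely the mechanism the paper builds with the regions $R_i$ and Lemmas~1 and~2. So your closing claim that ``Lemmas 1 and 2, the regions $R_i$ \ldots\ play no role'' is overstated: something equivalent to them is hidden inside your angular-integral assertion. What your approach does genuinely eliminate is the coset splitting of $A_{H_0}$ and the $\epsilon_0$ bookkeeping, and that is presumably what the paper's remark about Corollary~1 not needing ``the technical results'' is pointing at.
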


\begin{cor}[2] If $\mu$ is a regular orbital measure then $\mu^2\in
L^p(\g)$ for all $p<\frac{\dim(\g)}{\rank(\g)}$.
\end{cor}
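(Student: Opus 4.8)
The plan is to deduce the corollary from the Theorem by transferring everything to the Fourier side and applying the Hausdorff--Young inequality; the key point, which I would verify first, is that the target exponent $\dim\g/\rank\g$ is precisely the Hausdorff--Young conjugate of the critical exponent $k^{*}:=\dim\g/(\dim\g-\rank\g)=1+n/|\Phi|$ supplied by the Theorem. A short computation gives
\[
\frac{k^{*}}{k^{*}-1}=\frac{\dim\g/(\dim\g-\rank\g)}{\rank\g/(\dim\g-\rank\g)}=\frac{\dim\g}{\rank\g},
\]
so the range $p<\dim\g/\rank\g$ is exactly what a duality argument ought to produce.

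First I would reinterpret the Theorem as a statement about $\widehat{\mu^{2}}$. Reading the Theorem with a real exponent (exactly as in Corollary 1), $\hat\mu^{s}\in L^{2}(\g)$ if and only if $s>k^{*}$. Since $\widehat{\mu^{2}}=\hat\mu^{2}$ and $\int_{\g}|\widehat{\mu^{2}}|^{t}=\int_{\g}|\hat\mu|^{2t}=\int_{\g}|\hat\mu^{t}|^{2}$, this shows $\widehat{\mu^{2}}\in L^{t}(\g)$ if and only if $t>k^{*}$. Next I would record that $k^{*}\in(1,2)$: the inequality $k^{*}<2$ is equivalent to $2\rank\g<\dim\g$, which holds because $\dim\g=\rank\g+|\Phi|$ with $|\Phi|\ge 2n>n=\rank\g$. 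This is the step that makes the method go through, since it places the exponents of interest inside the Hausdorff--Young range $[1,2]$.

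With these in hand I would fix $p$ with $2\le p<\dim\g/\rank\g$ and set $t=p/(p-1)$. For $p\ge 2$ one checks that the condition $p<k^{*}/(k^{*}-1)=\dim\g/\rank\g$ is equivalent to $k^{*}<t\le 2$, so $\widehat{\mu^{2}}\in L^{t}(\g)$, and the Hausdorff--Young inequality gives $\|\mu^{2}\|_{p}\le C\|\widehat{\mu^{2}}\|_{t}<\infty$; hence $\mu^{2}\in L^{p}(\g)$. This handles the range $2\le p<\dim\g/\rank\g$. For $p<2$ I would appeal to compact support: $\mu^{2}$ is supported on the compact Minkowski sum $O_{H_{0}}+O_{H_{0}}$ and already lies in $L^{2}(\g)$, so H\"older's inequality on this set yields $\mu^{2}\in L^{p}(\g)$ for $p\le 2$ as well.

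Since the computation is short, the substance lies in three bookkeeping checks rather than in any single hard estimate. The things to get right are that the Theorem is genuinely available for non-integer exponents (so $t$ may be taken strictly between $k^{*}$ and $2$), that $k^{*}<2$ (so the conjugate exponent lands where Hausdorff--Young applies), and the identity $k^{*}/(k^{*}-1)=\dim\g/\rank\g$ displayed above. Because Hausdorff--Young is one-sided and its bound diverges as $t\downarrow k^{*}$, the endpoint $p=\dim\g/\rank\g$ is correctly excluded, matching the strict inequality in the statement; I would not expect to recover that endpoint by this approach.
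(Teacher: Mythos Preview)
Your argument is correct and is essentially the paper's own proof: show $\widehat{\mu^{2}}=\hat\mu^{2}\in L^{t}$ for $t>k^{*}=1+n/|\Phi|$ via the Theorem, then apply Hausdorff--Young (using $k^{*}<2$) to obtain $\mu^{2}\in L^{p}$ for $p<\dim\g/\rank\g$. You are in fact a bit more careful than the paper, explicitly checking $k^{*}<2$ and handling the range $p<2$ by compact support and H\"older, which the paper leaves implicit.
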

\begin{proof}
Our arguments show that $\hat{\mu}^2\in L^{p'}$ for
$p'<1+\frac{n}{|\Phi|}$. By the Hausdorff-Young inequality,
$\mu^2\in L^p$ for all $p<\frac{\dim(\g)}{\rank(\g)}$.
\end{proof}
It is worth noting that Corollary 2 is sharp when
$\g=\mathfrak{su}(2)$ by a result of Ragozin (see \cite{Ra2}, Prop
A.5).

\begin{proof}
(Of main theorem.) We prove a related result for all simple
subroot systems $\Psi$ of $\Phi$. All the notation will be as
before, including the definition of $\e$.

Our induction hypothesis: For all simple proper subroot systems
$\Psi$ of $\Phi$, if $k<1+\frac{n}{|\Phi|}+\e$ then $$\int_{\s\cap
B_r}
\frac{|A_{H_0}(H)|^{2k}dH}{|\prod{_{\alpha\in\Psi^+}\ip{\alpha}{
H}}|^{2k-2}} = O(r^{n-(k-1)|\Psi|}).$$ By this we mean that this
integral is bounded above, as a function of $r$, by
$Cr^{n-(k-1)|\Psi|}$ for some $C>0$.

For $m=1$, $\Psi$ is of Lie type $A_1$ and we get $$\int_1^r
\frac{|e^{itH_0}-e^{-itH_0}|^{2k}}{|t|^{2k-2}}dt.$$ Hence when
$k<1+\frac12$ the integrand is $O(r^{2-2k})$ and $2-2k>-1$. So if
$k<1+\frac12$ the integral is $O(r^{1-2(k-1)})$. Lemma 3 tells us
that
$1+\frac12=1+\frac{n}{|\Phi|}+(\frac12-\frac{n}{|\Phi|})>1+\frac{n}{|\Phi|}
+\e$.

Now we assume the result for all simple subroot systems of rank
$m-1$.

Consider a subroot system $\Psi$ of rank $m$. We will describe the
growth of the integral on $R_1$. Since we have not specified any
particular order among the $R_i$, and the integrand is continuous,
this is sufficient.

Let $\sigma_1, \ldots , \sigma_t$ be representatives from the left
cosets of $\V_1\leq \V$. We break up $|A_{H_0}|$ by cosets of
$\Psi_1$.

\begin{eqnarray*}
&&\int_{R_1\cap B_r} \frac{|
\sum_{j=1}^t\sum_{\sigma\in\V_1}\sgn(\sigma_j\sigma)
e^{i\ip{\sigma_j\sigma(H)}{H_0}}|^{2k}dH}{|\prod{_{\alpha\in\Psi^+}\ip{\alpha}{
H}}|^{2k-2}} \\ &\leq& 2^{2k}\sum_{j=1}^t \int_{R_1\cap B_r}
\frac{| \sum_{\sigma\in\V_1}\sgn(\sigma_j\sigma)
e^{i\ip{\sigma_j\sigma(H)}{H_0}}|^{2k}dH}{|\prod{_{\alpha\in\Psi^+}\ip{\alpha}{
H}}|^{2k-2}}\\
\end{eqnarray*}

For convenience we forget about the constant, and just write the
term of the $\sigma_j$ coset. We start by factoring out
$\left|\sgn(\sigma_j) e^{i\ip{\sigma_j(P(H))}{H_0}} \right|=1$ to
get $$ \int_{R_1\cap B_r} \frac{| \sum_{\sigma\in\V_1}\sgn(\sigma)
e^{i\ip{\sigma_j\sigma(H)}{H_0}-i\ip{\sigma_j(P(H))}{H_0}}|^{2k}dH}{|\prod{_{\alpha\in\Psi^+}\ip{\alpha}{
H}}|^{2k-2}}. $$

Since $P(H)=\sigma(P(H))$ (for $\sigma\in\V_1$) and
$\ip{\sigma(v)}{w}=\ip{v}{\sigma(w)}$ this integral equals $$
 \int_{R_1\cap B_r} \frac{| \sum_{\sigma\in\V_1}\sgn(\sigma)
e^{i\ip{\sigma(H-P(H))}{\sigma_j(H_0)}}|^{2k}dH}{|\prod{_{\alpha\in\Psi^+\setminus\Psi^+_1}\ip{\alpha}{
H}}|^{2k-2}|\prod{_{\alpha\in\Psi^+_1}\ip{\alpha}{ H}}|^{2k-2}}.
$$
Now we apply Lemma 2 to get the upper bound
$$ \int_{R_1\cap B_r}
\frac{C}{\|H\|^{(k-1)(|\Psi|-|\Psi_1|)}}\frac{|
\sum_{\sigma\in\V_1}\sgn(\sigma)
e^{i\ip{\sigma(H-P(H))}{\sigma_j(H_0)}}|^{2k}dH}{|\prod{_{\alpha\in\Psi^+_1}\ip{\alpha}{
H}}|^{2k-2}}.$$
At this point we can safely replace $\sigma_j(H_0)$ with
$H_0'=(I-P)\sigma_j(H_0)$.  Since $P(H)$ is orthogonal to
$\Psi_1$, we can change the inner products from $\ip{\alpha}{H}$
to $\ip{\alpha}{H-P(H)}$. If we also recall the bound
$\|P(H)\|\geq a\|H\|$ for all $H\in R_1$ from Lemma 1, this gives
$$ \int_{R_1\cap B_r}
\frac{C}{\|H\|^{(k-1)(|\Psi|-|\Psi_1|)}}\frac{|
\sum_{\sigma\in\V_1}\sgn(\sigma)
e^{i\ip{\sigma(H-P(H))}{H_0'}}|^{2k}dH}{|\prod{_{\alpha\in\Psi^+_1}\ip{\alpha}{
H-P(H)}}|^{2k-2}}$$
$$ \leq \int_{R_1\cap B_r}
\frac{C'}{\|P(H)\|^{(k-1)(|\Psi|-|\Psi_1|)}}\frac{|
\sum_{\sigma\in\V_1}\sgn(\sigma)
e^{i\ip{\sigma(H-P(H))}{H_0'}}|^{2k}dH}{|\prod{_{\alpha\in\Psi^+_1}\ip{\alpha}{
H-P(H)}}|^{2k-2}}.$$
$P$ maps onto a one dimensional subspace, say $\Span v_1$,
$\|v_1\|=1$. We can do a change of variables so that we are
integrating first with respect to $H'=H-P(H)\in \C$ and then $s$,
where $P(H)=sv_1$. If $a$ and $b$ are as in Lemma 1 then $s\geq a$
and $\|(I-P)H\|\leq b\|P(H)\|$ for $H\in R_1$ . Note that
$H\mapsto (P(H), (I-P)H)$ is an orthogonal change of variables so
the Jacobian is a constant.

If we now use Fubini's Theorem to rewrite our integral (and forget
the constant) we get
\begin{eqnarray*}
 &&
 \int_{a}^r \frac{1}{s^{(k-1)(|\Psi|-|\Psi_1|)}}\int_{\C\cap B_{b s}} \frac{| \sum_{\sigma\in\V_1}\sgn(\sigma)
e^{i\ip{\sigma(H')}{H'_0}}|^{2k}dH'}{|\prod{_{\alpha\in\Psi^+_1}\ip{\alpha}{
H'}}|^{2k-2}}ds.
\\
\end{eqnarray*}
 Note that no element of $\Psi$
annihilates $\sigma_j(H_0)$, so it is regular. It follows that no
element of $\Phi_1$ annihilates $H_0'$. Thus we can apply the
induction hypothesis. Since $m<n$,
$$1+\frac{m}{|\Psi|}=1+\frac{n}{|\Phi|}+(\frac{m}{|\Psi|}-\frac{n}{|\Phi|})>1+\frac{n}{\Phi}+\e
.$$ So if $k<1+\frac{n}{|\Phi|}+\e$ we have that the above
integral is at most
\begin{eqnarray*}
&&\int_{a}^r
\frac{1}{s^{(k-1)(|\Psi|-|\Psi_1|)}}O(s^{m-1-|\Psi_1|(k-1)})dr
\\
&=&O(r^{m-|\Psi|(k-1)}).
\\
\end{eqnarray*}
At some point in our induction we get that $n=m$ and $\Psi=\Phi$.
At this point our full induction hypothesis does not hold, but we
have that actual integral we are interested in is at most
$$\int_{\delta}^r O(r^{n-1-|\Phi|(k-1)})dr$$ if
$k<1+\frac{n}{|\Phi|}+\e$. This integral converges if
$$1+\frac{n}{|\Phi|}+\e>k>1+\frac{n}{|\Phi|}.$$

Hence $\hat{\mu}_{H_0}\in L^2(\g)$ if $k>1+\frac{n}{|\Phi|}$.

Now we show the necessity of the condition $k>1+\frac{n}{|\Phi|}$.

We can rewrite $$ \frac{|\sum_{\sigma\in\W}\sgn(\sigma)
e^{i\ip{\sigma(H)}{H_0}}|^{2k}}{|\prod{_{\alpha\in\Phi^+}\ip{\alpha}{
H}}|^{2k-2}}$$ as $$\frac1{\|H\|^{|\Phi|(k-1)}}
\frac{|\sum_{\sigma\in\W}\sgn(\sigma)
e^{i\|H\|\ip{\sigma(\frac{H}{\|H\|})}{H_0}}|^{2k}}
{\left|\prod{_{\alpha\in\Phi^+}\ip{\alpha}{\frac{H}{\|H\|}}}\right|^{2k-2}}$$
and consider this as $r^{-|\Phi|(k-1)} f(r, \phi_1, \ldots ,
\phi_{n-1})$, where $f$ is a function in polar coordinates.
$$f(r,\phi_1,\ldots  ,
\phi_{n-1})=\frac{|\sum_{\sigma\in\W}\sgn(\sigma)
e^{ir\ip{(1,\phi_1,\ldots  ,\phi_{n-1})}{H_0}}|^{2k}}
{|\prod{_{\alpha\in\Phi^+}\ip{\alpha}{(1,\phi_1,\ldots
,\phi_{n-1})}}|^{2k-2}}$$ As before, we will integrate in $\T$
with a ball around the origin removed, so we will always assume
$r\geq 1$.

If we fix $\Phi=(\phi_1,\ldots  ,\phi_{n-1})$, we see that
$f_\Phi(r):=f(r,\phi_1,\ldots  ,\phi_{n-1})$ is (the absolute
value of) the sum of continuous functions that are periodic in
$r$. Thus $f$ is almost periodic in $r$.

Since $\|\mu_{H_0}^k\|\neq 0$ and $f$ is continuous, we can find a
point $(r_0,\psi_1,\ldots  ,\psi_{n-1})$, a $\delta>0$ and a
$\epsilon>0$ so that if
$$U=\{(r, \phi_1,\ldots  ,\phi_{n-1}):
|\|\phi_i-\psi_i\|\leq\delta\forall i, |r-r_0|\leq\delta \}\subset
\T$$
then $f>2\epsilon>0$ on $U$.

We will have to change to polar coordinates to use this
observation. The Jacobian of this change of variables is
$$\Delta = r^{n-1}\sin^{n-2}\phi_1 \ldots  \sin^{n-2}\phi_{n-1}.$$
If necessary, we can modify $U$ so that $|\Delta|\geq Cr^{n-1}$ on
$U$, for some constant $C$. We then get that our integral greater
or equal to
$$\int_{\psi_1-\delta}^{\psi_1+\delta}\ldots
\int_{\psi_{n-1}-\delta}^{\psi_{n-1}+\delta}\int_{1}^\infty
C\frac1{r^{|\Phi|(k-1)}}f(r, \phi_1,\ldots  ,\phi_{n-1}) r^{(n-1)}
dr d\phi_{n-1}\ldots d\phi_1. $$

Say that $f_\Phi$ has an $\epsilon$ almost period in every
interval of size $M$. Pick $N\geq M+2\delta$. We know that
$f_\Phi(r)\geq 2\epsilon$ on $[r_0-\delta, r_0+\delta]$. Pick
$\tau_n$, an $\epsilon$ almost period of $f_\Phi$ in the interval
$[nN-r_0+\delta, (n+1)N-r_0-\delta]$, where $n>0$. Hence
$f_\Phi\geq\epsilon$ on $[r_0+\tau_n-\delta,
r_0+\tau_n+\delta]\subset [nN, (n+1)N]$. If $\chi_F$ is the
indicator function of $F=\bigcup_n [r_0+\tau_n-\delta,
r_0+\tau_n+\delta]$, then the inner integral is at least
$$\int_{1}^\infty C\epsilon \chi_F
r^{-(k-1)|\Phi|+n-1}dr\geq\int_{1}^\infty C\epsilon \chi_E
r^{-(k-1)|\Phi|+n-1}dr$$
where $E=\bigcup_n [nN, nN+2\delta]$. This integral is at least
$$\sum_{n=1}^\infty 2\delta C\epsilon (nN)^{-(k-1)|\Phi|+n-1}.$$
If $k\leq 1+\frac{n}{|\Phi|}$, this diverges. Thus the inner
integral is infinite for all $\phi_1,\ldots ,\phi_{n-1}$ in the
appropriate range. So if $k\leq 1+\frac{n}{|\Phi|}$, our integral
is infinite and $\mu^k\notin L^2(\g)$.
\end{proof}

\begin{remark}[1]
A similar result holds when $\Phi=\Phi_1\times\ldots \times\Phi_m$
is reducible. Say that the number of simple roots in $\Phi_i$ is
$r_i$, and the fundamental Weyl chamber of $\Phi_i$ is $\T_i$. In
this case the integrand splits to give
$$\int_{\T_1}\int_{\T_2}\ldots
\int_{\T_m}\frac{|A_{H_0}(t_1+t_2+\ldots
+t_m)|^{2k}}{|\prod{_{\alpha\in\Phi^+}\ip{\alpha}{ t_1+t_2+\ldots
+t_m}}|^{2k-2}}dt_m\ldots  dt_1.$$
This factors as
\begin{eqnarray*}
\int_{\T_1}\frac{A_{H_0}^{\Phi_1}(t_1)}{|\prod_{\alpha\in\Phi_1^+}\ip{\alpha}{t_1}}dt_1
\int_{\T_2}\frac{A_{H_0}^{\Phi_2}(t_2)}{|\prod_{\alpha\in\Phi_3^+}\ip{\alpha}{t_2}}dt_2
\ldots
\int_{\T_m}\frac{A_{H_0}^{\Phi_m}(t_m)}{|\prod_{\alpha\in\Phi_m^+}\ip{\alpha}{t_m}}dt_m.\\
\end{eqnarray*}
Since none of these factors can be zero, this is finite iff all
the integrals converge. Hence $\mu_{H_0}\in L^2(\g)$ iff $$k>\max
\{1+\frac{r_1}{|\Phi_1|}, 1+\frac{r_2}{|\Phi_2|},\ldots ,
1+\frac{r_m}{|\Phi_m|} \}.$$
\end{remark}

\begin{remark}[2] A measure $\mu$ is called $L^p$-improving if there is some
$p<2$ such that the operator $T_\mu: f \mapsto \mu*f$ is bounded
from $L^p(\g)$ to $L^2(\g)$. Using sophisticated arguments Ricci
and Travaglini \cite{RT} prove that for a regular, orbital measure
$\mu$, $T_\mu$ maps $L^p(\g)$ to $L^2(\g)$ if and only if $p\geq
1+\rank(\g)/(2\dim(\g)-\rank(\g))=p(\g)$. The same reasoning as
given in \cite{HStudia} Corollary 12 shows that our arguments give
the weaker result: $T_\mu$ is bounded from $L^p(\g)$ to $L^2(\g)$
for any $p>p(\g)$.
\end{remark}

\section*{Appendix A}
%$\frac{n}{|\Phi|}$ and $\frac{m}{|\Psi|}$.
\begin{center}
   \begin{tabular}{| c | c | c | c | }
     \hline
     $\Phi$ & $\frac{n}{|\Phi|}$ & $\Psi$ & $\frac{m}{|\Psi|}$  \\
     \hline\hline
     $A_n$ & $\frac1{n+1}$ & $A_{m}, m<n$ & $\frac1{m+1}$  \\ \hline
     $B_n$ & $\frac1{2n}$ & $B_{m}, m<n$ & $\frac1{2m}$  \\ \hline
     $C_n$ & $\frac1{2n}$ & $C_{m}, m<n$ & $\frac1{2m}$  \\ \hline
     $D_n$ & $\frac1{2(n-1)}$ & $D_{m}, m<n$ & $\frac1{2(m-1)}$  \\ \hline
     $E_6$ & $\frac1{12}$ & $D_{m},m<6$ & $\frac1{2(m-1)}$  \\ \hline
     $E_7$ & $\frac1{18}$ & $D_{m},m<7$ & $\frac1{2(m-1)}$  \\ \hline
     $E_7$ & $\frac1{18}$ & $E_6$ & $\frac1{12}$  \\ \hline
     $E_8$ & $\frac1{30}$ & $D_{m},m<8$ & $\frac1{2(m-1)}$  \\ \hline
     $E_8$ & $\frac1{30}$ & $E_6$ & $\frac1{12}$  \\ \hline
     $E_8$ & $\frac1{30}$ & $E_7$ & $\frac1{18}$  \\ \hline
     $F_4$ & $\frac1{12}$ & $B_{m},m<4$ & $\frac1{2m}$  \\ \hline
     $G_2$ & $\frac1{6}$ & $A_1$ & $\frac1{2}$  \\ \hline
   \end{tabular}
 \end{center}

\end{document}